\newtheorem {theorem}{Theorem}
\newtheorem {lemma}[theorem]{Lemma}
\newtheorem {proposition}[theorem]{Proposition}
\newtheorem {corollary}[theorem]{Corollary}
\theoremstyle{definition}
\newtheorem *{remark}{Remark}
\newcommand{\tower}{\mathcal{T}^+}
\newcommand{\mfs}{\mathfrak{s}}
\newcommand{\spinc}{\text{Spin}^c}
\newcommand{\HFred}{\mathit{HF}_{red}}
\newcommand{\HFplus}{\mathit{HF}^+}
\newcommand{\hmto}{\widecheck{\mathit{HM}}}
\newcommand{\hmfrom}{\widehat{\mathit{HM}}}
\newcommand{\hmbar}{\overline{\mathit{HM}}}
\newcommand{\hmred}{\mathit{HM}_{red}}
\newcommand{\hsto}{\widecheck{\mathit{HS}}}
\newcommand{\hsfrom}{\widehat{\mathit{HS}}}
\newcommand{\hsbar}{\overline{\mathit{HS}}}
\newcommand{\cfkinf}{\mathit{CFK}^\infty}
\newcommand{\F}{\mathbb{F}}
\newcommand{\Q}{\mathbb{Q}}
\newcommand{\R}{\mathcal{R}}
\title{A remark on the geography problem in Heegaard Floer homology}
\author{Jonathan Hanselman}
\address{Department of Mathematics, Princeton University}
\email{jh66@princeton.edu}
\author{{\c C}a{\u g}atay Kutluhan}
\address{Department of Mathematics, University at Buffalo}
\email{kutluhan@buffalo.edu}
\author{Tye Lidman}
\address{Department of Mathematics, North Carolina State University}
\email{tlid@math.ncsu.edu}
\date{}
\begin{document}
\onehalfspacing
\sloppy
\maketitle

\begin{abstract}We give new obstructions to the module structures arising in Heegaard Floer homology.  As a corollary, we characterize the possible modules arising as the Heegaard Floer homology of an integer homology sphere with one-dimensional reduced Floer homology.  Up to absolute grading shifts, there are only two. We use this corollary to show that the chain complex depicted by Ozsv\'ath, Stipsicz, and Szab\'o in \cite{OSS} to argue that there is no algebraic obstruction to the existence of knots with trivial $\epsilon$ invariant and non-trivial $\Upsilon$ invariant cannot be realized as the knot Floer complex of a knot. \end{abstract}

\section{Introduction} 
\label{sec:intro}

Heegaard Floer homology is a collection of three-manifold invariants defined by Ozsv\'ath and Szab\'o which were inspired by the Seiberg--Witten equations in gauge theory \cite{OSInvariance}.  The most refined of these invariants is $\mathit{HF}^+$, which is a graded module over $\F[U]$, where $U$ is an endomorphism of degree $-2$, and $\F$ denotes the field $\mathbb{Z}/2\mathbb{Z}$.  The simplest example is the Heegaard Floer homology of the 3-sphere for which $\mathit{HF}^+(S^3) = \tower_{(0)}$, where $\tower_{(d)}$ denotes $\F[U,U^{-1}]/U \cdot \F[U]$ with $gr(1) = d$.  More interesting examples include $\mathit{HF}^+(\pm \Sigma(2,3,5)) = \tower_{(\mp 2)}$ and $\mathit{HF}^+(\Sigma(2,3,7)) = \tower_{(0)} \oplus \F_{(0)}$, while $\mathit{HF}^+(-\Sigma(2,3,7)) = \tower_{(0)} \oplus \F_{(-1)}$ where a positive orientation refers to the orientation induced on the boundary of a positive-definite plumbing.  In fact, for every $\spinc$ rational homology sphere $(Y,\mfs)$, we have a (non-canonical) splitting $\mathit{HF}^+(Y,\mfs) = \tower_{(d)} \oplus \HFred(Y,\mfs)$, where $\HFred(Y, \mfs)$ is a finitely generated torsion module and $d \in \mathbb{Q}$.  If $Y$ is an integer homology sphere, then there is a unique $\spinc$ structure.  

The $d$-invariant is an invariant of $\spinc$ rational homology cobordism, and has become pervasive in applications to singularity theory, knot concordance, and unknotting numbers of knots (see for instance \cite{BorodzikLivingston, ManolescuOwens, OSUnknotting}).  On the other hand, if $\HFred(Y) = \bigoplus_{\mfs} \HFred(Y,\mfs) = 0$, then $Y$ cannot admit a co-orientable taut foliation \cite{OSGenus}.  The interplay between the $d$-invariants and the reduced Floer homology is also quite powerful; this was used to prove the Dehn surgery characterization of the unknot in $S^3$ \cite{KMOS} (see also \cite{OSGenus, Gainullin}).  

In this note, we give new restrictions on the module structure of the Heegaard Floer homology of rational homology spheres.  

\begin{theorem}\label{thm:main}
Let $Y$ be a rational homology sphere and $\mfs$ a self-conjugate $\spinc$ structure.  If $\HFred(Y,\mfs)$ is supported only in degrees strictly greater than $d(Y,\mfs)$, then $\dim_\F \HFred(Y,\mfs)$ is even.  The same statement holds if $\HFred(Y,\mfs)$ is supported in degrees strictly less than $d(Y,\mfs) - 1$.   
\end{theorem}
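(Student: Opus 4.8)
The plan is to prove the first assertion and deduce the second from it by orientation reversal. For the reduction I would apply the first assertion to $-Y$ (for which $\mfs$ remains self-conjugate, with $d(-Y,\mfs)=-d(Y,\mfs)$) and invoke the duality isomorphism identifying the degree-$k$ part of $\HFred(Y,\mfs)$ with the degree-$(-k-1)$ part of $\HFred(-Y,\mfs)$: under it, the hypothesis ``$\HFred(Y,\mfs)$ is supported in degrees $<d(Y,\mfs)-1$'' becomes ``$\HFred(-Y,\mfs)$ is supported in degrees $>d(-Y,\mfs)$'', while $\dim_\F\HFred(Y,\mfs)=\dim_\F\HFred(-Y,\mfs)$. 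So from now on I assume $\HFred(Y,\mfs)$ is supported in degrees strictly greater than $d:=d(Y,\mfs)$.

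The main ingredient is $\text{Pin}(2)$-equivariant monopole Floer homology, available because the self-conjugate $\mfs$ underlies a spin structure: one has graded modules $\hsto(Y,\mfs)$, $\hsfrom(Y,\mfs)$, $\hsbar(Y,\mfs)$ over $\R=\F[V,Q]/(Q^3)$, with $V$ of degree $-4$ and $Q$ of degree $-1$, refining the usual ($S^1$-equivariant) monopole Floer homologies $\hmto$, $\hmfrom$, $\hmbar$. Forgetting the $\text{Pin}(2)$-action down to the subgroup $S^1$ yields, via the Gysin sequence of the associated double cover $BS^1\to B\text{Pin}(2)$ together with the identification $\hmto(Y,\mfs)\cong\mathit{HF}^+(Y,\mfs)$, a long exact sequence whose upshot is an isomorphism of graded $\F$-vector spaces, in each degree $g$,
\[
\mathit{HF}^+_{g}(Y,\mfs)\ \cong\ \operatorname{coker}\bigl(Q\colon\hsto_{g+1}(Y,\mfs)\to\hsto_{g}(Y,\mfs)\bigr)\ \oplus\ \ker\bigl(Q\colon\hsto_{g}(Y,\mfs)\to\hsto_{g-1}(Y,\mfs)\bigr).
\]

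Next I would separate the ``tower'' and ``reduced'' parts of $\hsto(Y,\mfs)$. Write $\vtower\subseteq\hsto(Y,\mfs)$ for the image of $\hsbar(Y,\mfs)\to\hsto(Y,\mfs)$ --- the $\text{Pin}(2)$-analogue of $\tower_{(d)}$ --- and $\mathit{HS}_{red}(Y,\mfs)$ for its finite-dimensional cokernel, $Q$ acting compatibly on all three modules. Summing the dimensions in the displayed isomorphism over all $g$ and making the sum finite by subtracting, in each degree, the one-dimensional contribution of $\tower_{(d)}$ (supported in degrees $d,d+2,d+4,\dots$), the snake lemma applied to $0\to\vtower\to\hsto(Y,\mfs)\to\mathit{HS}_{red}(Y,\mfs)\to0$, together with rank--nullity on the finite-dimensional degree-$(-1)$ endomorphism $Q|_{\mathit{HS}_{red}(Y,\mfs)}$, yields
\[
\dim_\F\HFred(Y,\mfs)\ =\ 2\,\dim_\F\ker\bigl(Q|_{\mathit{HS}_{red}(Y,\mfs)}\bigr)\ +\ \delta ,
\]
where
\[
\delta\ :=\ \sum_g\Bigl(\dim_\F\operatorname{coker}\bigl(Q\colon\vtower_{g+1}\to\vtower_g\bigr)+\dim_\F\ker\bigl(Q\colon\vtower_g\to\vtower_{g-1}\bigr)-\dim_\F(\tower_{(d)})_g\Bigr)
\]
is a finite sum. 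The first term is even, so the whole statement reduces to showing that $\delta$ is even --- in fact that $\delta=0$, i.e.\ that the $Q$-kernel and $Q$-cokernel of the tower $\vtower$ reassemble exactly into $\tower_{(d)}$.

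This last point is the heart of the matter, and the step I expect to be the main obstacle. Since $\R$ is not a principal ideal domain, $\hsto(Y,\mfs)$ need not split as $\vtower\oplus\mathit{HS}_{red}(Y,\mfs)$, and $\vtower$ itself need not be the standard $\text{Pin}(2)$-tower $\vtower_{(d)}$: its three ``$Q$-layers'' may begin at mismatched degrees, and such a mismatch is exactly what allows $\delta\neq0$ --- for $\pm\Sigma(2,3,7)$ it accounts for the one-dimensional $\HFred$, with the discrepancy living in the excluded degrees $d$ and $d-1$. The plan is to classify the possible structures of $\vtower$ --- equivalently, the admissible $\text{Pin}(2)$-correction terms of $(Y,\mfs)$ (the $\text{Pin}(2)$-analogues of the $d$-invariant), constrained by $V$- and $Q$-equivariance and by the conjugation symmetry --- and to show that any such mismatch forces a contribution to $\mathit{HF}^+_{g}(Y,\mfs)$ over and above $\tower_{(d)}$ for some $g\in\{d,d-1\}$, that is, forces $\HFred(Y,\mfs)$ to be nonzero in degree $d$ or $d-1$. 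Under the standing hypothesis that $\HFred(Y,\mfs)$ is supported in degrees $>d$, this cannot occur; hence $\vtower\cong\vtower_{(d)}$, $\delta=0$, and $\dim_\F\HFred(Y,\mfs)$ is even. The real work is thus the structural analysis of the $\text{Pin}(2)$-tower together with the degree bookkeeping that pins any defect to the two degrees $d$ and $d-1$.
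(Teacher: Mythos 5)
Your setup mirrors the paper's through the reduction by orientation reversal, the Gysin sequence (Proposition~\ref{thm:gysin}), and the idea that parity must ultimately come from bookkeeping the tower against $\hsto$. But the proposal does not close: the decisive claim --- that $\delta=0$ (or even just that $\delta$ is even) under the hypothesis that $\HFred$ is supported in degrees $>d$ --- is precisely the ``structural analysis of the $\text{Pin}(2)$-tower'' that you yourself flag as the main obstacle, and no argument is given for it. Classifying the image $\vtower$ of $\hsbar\to\hsto$, i.e.\ the three $Q$-layers and their relative grading offsets, is not routine; it is equivalent to controlling the Manolescu-type correction invariants, and it is not at all clear a priori why the hypothesis on $\HFred$ would force the layers to align so that $\delta$ vanishes. (Nor is it clear that $\delta$ should be exactly zero rather than merely even.) As written this is a gap, not a deferral of a standard step.

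The paper sidesteps this by never isolating $\vtower$. Instead it studies the forgetful map $\pi_k:\hmto_k\to\hsto_k$ restricted to the image of high powers of $U$ (a canonical copy of the tower $\tower_{(0)}$ inside $\hmto$), denoted $\bar\pi_k$. Two short lemmas then do the work. First, $\hsto_k=0$ for $k<0$ and $\hsto_0=\F$, directly from exactness and the vanishing of $\hmto$ in negative degrees. Second, and crucially, $\bar\pi_{4i}$ is nonzero and $\bar\pi_{4i+2}$ is zero for all $i\ge0$: the $\R$-equivariance gives $\bar\pi_i\circ U^2=V\circ\bar\pi_{i+4}$, so nonvanishing propagates up by $4$ from degree $0$, while nonvanishing in some degree $\equiv 2\pmod 4$ would make $\dim_\F\hsto_k$ grow without bound in large $k$, contradicting $\dim_\F\hsto_k\le 1$ for $k\gg 0$ (a consequence of Proposition~\ref{thm:hsbar}). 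With this mod-$4$ pattern one checks $\hsto_{4k+3}=0$ for large $k$, and then an Euler-characteristic count over the finite stretch of the Gysin sequence from $\hmto_{4k+3}=0$ down to $\hmto_{-1}=0$ gives that $\sum_{i=0}^{4k+2}\dim_\F\hmto_i=2(k+1)+\dim_\F\hmred$ is even, hence $\dim_\F\hmred$ is even. No classification of $\vtower$ --- and in particular no claim that $\vtower\cong\vtower_{(d)}$ --- is needed. If you want to salvage your route, the missing ingredient is an actual proof that the $Q$-layer offsets of $\vtower$ are pinned by the hypothesis; I would encourage you to instead adapt the paper's observation that the single relation $\bar\pi_i\circ U^2=V\circ\bar\pi_{i+4}$ combined with the growth bound on $\hsto$ already forces the needed periodicity, which is a much lighter input than a full structure theorem.
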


Note that every three-manifold admits at least one self-conjugate $\spinc$ structure and the unique $\spinc$ structure on an integer homology sphere is tautologically self-conjugate.  Theorem~\ref{thm:main} immediately yields a characterization of the modules with one-dimensional reduced Floer homology.   

\begin{corollary}
\label{cor:main}
Let $Y$ be a rational homology sphere equipped with a self-conjugate $\spinc$ structure $\mfs$.  If $\dim_\F \HFred(Y,\mfs) = 1$ then $\mathit{HF}^+(Y,\mfs) = \tower_{(d)} \oplus \F_{(d)}$ or $\mathit{HF}^+(Y,\mfs) = \tower_{(d)} \oplus \F_{(d-1)}$.
\end{corollary}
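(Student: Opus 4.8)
The corollary reduces at once to Theorem~\ref{thm:main}, so the plan is really to prove the theorem; I record the reduction first. If $\dim_\F\HFred(Y,\mfs)=1$ then, being a finitely generated torsion $\F[U]$-module of total dimension one, $\HFred(Y,\mfs)$ is a single copy of $\F$ in some degree $e$ on which $U$ acts as zero, so the splitting $\mathit{HF}^+(Y,\mfs)=\tower_{(d)}\oplus\HFred(Y,\mfs)$ from the introduction becomes $\mathit{HF}^+(Y,\mfs)=\tower_{(d)}\oplus\F_{(e)}$ with $d=d(Y,\mfs)$. Since $1$ is odd, the contrapositive of the first assertion of Theorem~\ref{thm:main} forces $e\le d$ and the contrapositive of the second forces $e\ge d-1$; since $e-d\in\mathbb Z$, only $e=d$ and $e=d-1$ remain.

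For Theorem~\ref{thm:main} the idea is to use the self-conjugacy of $\mfs$ by working in $\mathrm{Pin}(2)$-equivariant monopole Floer homology. First transport the statement to monopole Floer homology via its isomorphism with Heegaard Floer homology: $\mathit{HF}^+(Y,\mfs)$ becomes the appropriate flavor $\hmto(Y,\mfs)$, $\HFred(Y,\mfs)$ becomes $\hmred(Y,\mfs)$, and $d(Y,\mfs)$ becomes the monopole correction term; an orientation discrepancy here would be harmless, since the two assertions of the theorem are interchanged by reversing orientation. Because $\mfs$ is self-conjugate, Lin's $\mathrm{Pin}(2)$-equivariant refinement $\hsto(Y,\mfs)$ is defined: it is a module over $\R$, which is $\F[V][Q]/(Q^3)$ with $\deg V=-4$ and $\deg Q=-1$, it admits a (noncanonical) splitting $\hsto(Y,\mfs)\cong\vtower\oplus\mathit{HS}_{red}(Y,\mfs)$ into a $\mathrm{Pin}(2)$-tower and a finitely generated torsion part, and, because $S^1$ sits in $\mathrm{Pin}(2)$ with quotient $\mathbb Z/2$, there is a Gysin-type long exact sequence
\[
\cdots\to\hsto_{*+1}(Y,\mfs)\xrightarrow{\,Q\,}\hsto_{*}(Y,\mfs)\to\hmto_{*}(Y,\mfs)\to\hsto_{*}(Y,\mfs)\xrightarrow{\,Q\,}\hsto_{*-1}(Y,\mfs)\to\cdots
\]
in which multiplication by $Q$ plays the role of the Euler class.

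Now the hypothesis enters. Suppose $\hmred(Y,\mfs)$ is supported in degrees strictly above $d$. Then $\hmto(Y,\mfs)$ coincides with $\tower_{(d)}$ in all sufficiently low degrees (and, $\hmred(Y,\mfs)$ being finite, in all sufficiently high degrees as well), and one should check that this forces $\hsto(Y,\mfs)$ to coincide with $\vtower$ outside a finite range, forces $\mathit{HS}_{red}(Y,\mfs)$ to be supported strictly above the bottom of $\vtower$, and makes the tower parts of the displayed sequence split off as an exact subsequence --- namely the corresponding sequence for $S^3$, up to grading shift. What remains is a long exact sequence on the reduced parts,
\[
\cdots\to\mathit{HS}_{red,*}(Y,\mfs)\xrightarrow{\,Q\,}\mathit{HS}_{red,*-1}(Y,\mfs)\to\hmred_{*-1}(Y,\mfs)\to\mathit{HS}_{red,*-1}(Y,\mfs)\xrightarrow{\,Q\,}\mathit{HS}_{red,*-2}(Y,\mfs)\to\cdots,
\]
and since $\mathit{HS}_{red}(Y,\mfs)$ is finite dimensional, $Q$ acts on it with $\dim_\F\ker Q=\dim_\F\operatorname{coker}Q$. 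Breaking the sequence into short exact sequences and summing dimensions over all gradings gives $\dim_\F\hmred(Y,\mfs)=\dim_\F\ker(Q|_{\mathit{HS}_{red}(Y,\mfs)})+\dim_\F\operatorname{coker}(Q|_{\mathit{HS}_{red}(Y,\mfs)})=2\dim_\F\ker(Q|_{\mathit{HS}_{red}(Y,\mfs)})$, which is even; as $\hmred(Y,\mfs)\cong\HFred(Y,\mfs)$, this is the first assertion. The second follows by applying the first to $(-Y,\mfs)$: using $d(-Y,\mfs)=-d(Y,\mfs)$ together with the duality $\HFred(Y,\mfs)_k\cong\HFred(-Y,\mfs)_{-k-1}$, the hypothesis that $\HFred(Y,\mfs)$ be supported in degrees strictly below $d(Y,\mfs)-1$ translates exactly into the hypothesis that $\HFred(-Y,\mfs)$ be supported strictly above $d(-Y,\mfs)$, the one-unit asymmetry being precisely the shift in that duality.

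The main obstacle is the middle step: passing rigorously from the $Q$-equivariant Gysin sequence for the full groups to the one for the reduced parts, i.e., showing that the support hypothesis on $\HFred(Y,\mfs)$ really does separate $\mathit{HS}_{red}(Y,\mfs)$ from the tower $\vtower$ and from the images of the bar groups $\hsbar(Y,\mfs)$, $\hmbar(Y,\mfs)$ that define the reduced parts. Setting up the comparison sequence between $\hsto$ and $\hmto$ with correct gradings and recording the $\R$-module structure theorem for $\hsto(Y,\mfs)$ are the other inputs; with those in hand, the evenness comes for free from the equality of the dimensions of the kernel and cokernel of an endomorphism of a finite-dimensional vector space. (One could instead try to run a parallel argument in involutive Heegaard Floer homology using the conjugation involution, but the $\mathrm{Pin}(2)$-monopole picture is the one in which reduced parts and correction terms compare most directly.)
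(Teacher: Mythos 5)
Your opening paragraph, which deduces the corollary from Theorem~\ref{thm:main}, is correct and is exactly the paper's (implicit) one-line reduction: a one-dimensional torsion $\F[U]$-module is $\F_{(e)}$ for some degree $e$, and the two parity constraints of Theorem~\ref{thm:main}, taken in the contrapositive, pin down $e\in\{d,d-1\}$. The bulk of your proposal is then an attempted proof of Theorem~\ref{thm:main} itself, and there you take a route genuinely different from the paper's, with a gap you yourself flag but do not close. You posit a direct-sum splitting $\hsto(Y,\mfs)\cong\vtower\oplus\mathit{HS}_{red}(Y,\mfs)$ and then want the Gysin sequence of Proposition~\ref{thm:gysin} to decompose into an exact ``tower'' subsequence (identified with the $S^3$ sequence up to shift) and an exact ``reduced'' sequence, on which $\dim_\F\ker Q=\dim_\F\operatorname{coker}Q$ gives the evenness. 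None of this is verified, and none of it is automatic. The ring $\R=\F[[V]][Q]/Q^3$ is not a PID, so even the existence of such a splitting is not a formal consequence of finite generation as it is over $\F[U]$. Even if $\vtower$ is canonically a submodule of $\hsto$, a chosen complement $\mathit{HS}_{red}$ need not be preserved by $\iota_k$ and $\pi_k$, and if one works instead with the quotient $\hsto/\vtower$, then exactness of both the tower subsequence and the induced reduced sequence requires a separate argument: descending a long exact sequence to sub- and quotient sequences preserves exactness only if one of the two is already known to be exact. Your final dimension count is clean algebra, but it rests on these unestablished structural claims.

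The paper's proof deliberately avoids any attempt to split the Gysin sequence. It works with the full groups throughout: Lemma~\ref{lem:low-degree} shows $\hsto_k(Y,\mfs)=0$ for $k<0$ and $\hsto_0(Y,\mfs)=\F$; Lemma~\ref{lem:Usquared-action} uses the $V$-versus-$U^2$ compatibility of the Gysin maps, together with the eventual rank bound of Lemma~\ref{lem:high-degree}, to determine precisely when the tower-restricted map $\bar\pi_k$ is nontrivial ($k\equiv 0$) or trivial ($k\equiv 2$ mod $4$); and then the parity of $\dim_\F\hmred(Y,\mfs)$ falls out of an Euler-characteristic count over the bounded exact sequence running from $\hmto_{4k+3}(Y,\mfs)=0$ down to $\hmto_{-1}(Y,\mfs)=0$ for $k\gg0$. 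If you want your version to stand, the work is exactly in proving the decomposition claims you label ``one should check''; they are not supplied by the ingredients you cite, and the paper's argument is a good guide to how much extra care the interaction between the tower and the reduced part actually requires.
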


\noindent By the computations of $\mathit{HF}^+(\pm \Sigma(2,3,7))$ stated above, we see that the two possible relatively graded modules with $\dim_\F \HFred = 1$ are realized.   

One may also use the above restrictions to obstruct a chain complex from being realized as the knot Floer complex of a knot inside an integer homology spheres. The following corollary demonstrates this utility of Theorem \ref{thm:main} and Corollary \ref{cor:main}. 
\begin{corollary}
\label{cor:noknotcomplex}
The $\epsilon$-trivial complex $C$ with non-trivial $\Upsilon_C$ depicted in \cite{OSS} cannot be realized as the knot Floer complex of a knot inside an integer homology sphere. 
\end{corollary}

The argument for the proof of Theorem \ref{thm:main} will be a result of the isomorphisms with monopole Floer homology (see Theorem~\ref{thm:klt}) and its relationship, via the Gysin sequence of Lin \cite{Lin1}, with the Pin(2)-monopole Floer homology.  For the reader with a distaste for gauge theory, we point out that the arguments only use the formal properties of these theories.  We briefly review these properties in Section~\ref{sec:background}, and provide a proof of Theorem~\ref{thm:main} in Section~\ref{sec:proofs}. We prove Corollary~\ref{cor:noknotcomplex} in Section \ref{sec:corollary}.

We hope that this note encourages further work utilizing the strengths of both Heegaard Floer homology and Seiberg-Witten theory in conjunction.  

\subsection*{Acknowledgements}
Jonathan Hanselman was supported in part by NSF grants DMS-1148490 and DMS-1711926. \c{C}a\u{g}atay Kutluhan was supported in part by NSF grant DMS-1360293.  Tye Lidman was supported in part by NSF grants DMS-1148490 and DMS-1709702.
\section{Background}
\label{sec:background}

In order to prove Theorem~\ref{thm:main}, we use the Pin(2)-symmetry of solutions of the Seiberg-Witten equations to rule out certain graded module structures in Heegaard Floer homology. To be more precise, we use the Pin(2)-monopole Floer homology as defined by Lin in \cite{Lin1}. The latter is a Morse--Bott version of Kronheimer and Mrowka's monopole Floer homology (see \cite{KM}). In this article, we will not need the definitions of either the monopole Floer homology or the Pin(2)-monopole Floer homology. It suffices to work with their formal properties, which we review next.  

First, to make connection with the Seiberg-Witten equations, we appeal to the isomorphism between Heegaard Floer homology and monopole Floer homology as is proved in \cite{KLT1,KLT2,KLT3,KLT4,KLT5}. 
\begin{theorem}[Main Theorem in \cite{KLT1}]\label{thm:klt}
Let $Y$ be a closed oriented three-manifold and $\mfs$ be a $\spinc$ structure on $Y$.  Then, $\mathit{HF}^+(Y,\mfs)$ (respectively, $\mathit{HF}^-(Y,\mfs)$ and $\mathit{HF}^\infty(Y,\mfs)$) and $\hmto_\ast(Y,\mfs, c_b)$ (respectively, $\hmfrom_\ast(Y,\mfs, c_b)$ and $\hmbar_\ast(Y,\mfs, c_b)$) are isomorphic as relatively graded $\F[U]$-modules.  
\end{theorem}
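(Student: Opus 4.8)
The plan is to follow the approach of Kutluhan--Lee--Taubes, routing the comparison through an intermediate, embedded-contact-homology-type complex built directly from a Heegaard diagram. (Colin--Ghiggini--Honda together with Taubes's isomorphism between embedded contact homology and monopole Floer homology give an alternative route to the relevant part of the statement; the structure is similar.) Fix a strongly admissible pointed Heegaard diagram $(\Sigma, \alpha, \beta, z)$ for $(Y, \mfs)$, with $\alpha$ and $\beta$ the two $g$-tuples of attaching circles, together with a compatible self-indexing Morse function $f \colon Y \to [0,3]$ having a single index-$0$ and a single index-$3$ critical point and with $\Sigma = f^{-1}(3/2)$. From this data one constructs a $1$-form $a$ on $Y$ that is of contact type away from tubular neighborhoods of the ascending and descending spheres of the index-$1$ and index-$2$ critical points, and whose Reeb flow has closed orbits in natural bijection with the intersection points of $\mathbb{T}_\alpha$ and $\mathbb{T}_\beta$ in $\mathrm{Sym}^g(\Sigma)$, together with two distinguished Reeb orbits near the index-$0$ and index-$3$ critical points. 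Allowing arbitrary multiplicity along one of these builds in a degree $-2$ variable and yields a relatively graded $\F[U]$-complex $(\mathcal{C}_\ast, \partial_{ech})$ designed to model $\mathit{HF}^+(Y, \mfs)$, with ``from'' and ``$\infty$'' variants defined analogously; the $\spinc$ structure $\mfs$ selects the relevant component on each side.

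Step A (the Heegaard side): I would show that $(\mathcal{C}_\ast, \partial_{ech})$ is isomorphic to $\mathit{HF}^+(Y, \mfs)$ as a relatively graded $\F[U]$-module by a neck-stretching argument. Degenerating the geometry of $a$ along $\Sigma$ forces the finite-energy $J$-holomorphic curves in $\mathbb{R} \times Y$ counted by $\partial_{ech}$ to concentrate near $\mathbb{R} \times \Sigma$: the pieces lying over the two handlebodies become the (trivial) ascending and descending disks, while the remaining piece limits to a holomorphic multisection over $\Sigma$, i.e.\ a holomorphic Whitney disk in $\mathrm{Sym}^g(\Sigma)$ with boundary on $\mathbb{T}_\alpha$ and $\mathbb{T}_\beta$ disjoint from $\{z\} \times \mathrm{Sym}^{g-1}(\Sigma)$, in the sense of the cylindrical reformulation of Heegaard Floer homology. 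After checking that the embedded-contact and Maslov index computations agree and that no curves are created or lost in the limit, $\partial_{ech}$ is identified with $\partial^+$ and the $U$-actions match by construction.

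Step B (the Seiberg--Witten side): I would show that $(\mathcal{C}_\ast, \partial_{ech})$ is isomorphic to $\hmto_\ast(Y, \mfs, c_b)$ as a relatively graded $\F[U]$-module, via the Taubes program. Perturbing the three-dimensional Seiberg--Witten equations by the Taubes perturbation built from $a$ with large parameter $r$, the resulting a priori estimates set up, for $r$ large, a correspondence between irreducible solutions and multiplicity data on the Reeb orbits --- hence between critical points of the perturbed Chern--Simons--Dirac functional and generators of $\mathcal{C}_\ast$ --- and between Seiberg--Witten instantons on $\mathbb{R} \times Y$ and the embedded-contact-type curves of Step A. The difficulty absent from Taubes's original setting is that $a$ is not a global contact form: it degenerates in the ``binding''-like region near the index-$1$ and index-$2$ spheres, so the large-$r$ analysis must be supplemented by a gluing/handle-addition argument accounting for the contributions supported there. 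One must also follow the reducible locus carefully, both to see that the monopole $U$-tower matches the tower built into $\mathcal{C}_\ast$ and to obtain the ``from'' and ``bar'' versions simultaneously with the ``to'' version.

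Step C (assembly and invariance): combining Steps A and B yields $\mathit{HF}^+(Y, \mfs) \cong \hmto_\ast(Y, \mfs, c_b)$; I would then match the relative gradings by identifying the spectral-flow grading on the monopole side with the Maslov grading, and check compatibility with the $\spinc$ decomposition. The ``from'' and ``bar'' statements (with matching $U$-completions in the $\mathit{HF}^-$ and $\mathit{HF}^\infty$ flavors, as usual) follow in the same way, and since on both sides the three flavors sit in compatible long exact sequences --- $\cdots \to \mathit{HF}^- \to \mathit{HF}^\infty \to \mathit{HF}^+ \to \cdots$ and its monopole analogue --- the three isomorphisms can be chosen compatibly. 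Finally one proves independence of all auxiliary choices: stabilization and handleslides on the Heegaard side, and the metric, the almost complex structure, the abstract perturbations, and the parameter $r$ on the analytic side, via continuation maps and standard cobordism arguments. The main obstacle throughout is the holomorphic-curve bookkeeping in Steps A and B --- the $r \to \infty$ compactness and gluing, the analysis near the reducible locus, and above all the handle-addition correction forced by the non-contact region --- which is precisely where the Kutluhan--Lee--Taubes series concentrates its effort.
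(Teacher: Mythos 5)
This theorem is not proved in the paper at all: it is quoted as the Main Theorem of Kutluhan--Lee--Taubes \cite{KLT1} (with the Taubes and Colin--Ghiggini--Honda route noted as an alternative), so there is no in-paper argument to compare yours against. Your outline does accurately reproduce the architecture of the cited proof --- the intermediate ech-type complex built from a Heegaard diagram and a compatible self-indexing Morse function, neck-stretching to identify it with $\mathit{HF}^+$, the large-$r$ Taubes perturbation and handle-addition analysis to identify it with $\hmto$, and the assembly/invariance step --- but be aware that what you have written is a program rather than a proof: each of your Steps A and B constitutes the content of one or more of the papers \cite{KLT2,KLT3,KLT4,KLT5}, and for the purposes of the present note the theorem should simply be cited, as the authors do.
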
 

\noindent Here, $\mathring{\mathit{HM}}(Y,\mfs,c_b)$ denotes the monopole Floer homology of $(Y,\mfs)$ with a balanced perturbation. In the case of a torsion $\spinc$ structure, this is the same as the standard monopole Floer homology (see \cite[\textsection 30]{KM}). Otherwise, one would work with completions of these modules, denoted $\mathring{\mathit{HM}}_\bullet(Y,\mfs,c_b)$, with respect to the variable $U$, making them modules over $\F[[U]]$. According to \cite[Theorem 31.1.1]{KM}, we have the following isomorphisms:
\[\hmto_\bullet(Y,\mfs,c_b)\cong\hmto_\bullet(Y,\mfs),\;\hmfrom_\bullet(Y,\mfs,c_b)\cong\hmfrom_\bullet(Y,\mfs),\;\hmbar_\bullet(Y,\mfs,c_b)\cong\hmbar_\bullet(Y,\mfs).\] 
In any case, completion does not affect the chain complex that defines $\hmto$ since every element of $\hmto$ is annihilated by some finite power of $U$. 

The part of the isomorphism between Heegaard Floer homology and monopole Floer homology relevant to this article also follows from combined works of Taubes \cite{Taubes1,Taubes2,Taubes3,Taubes4,Taubes5} and Colin, Ghiggini, and Honda \cite{CGH1,CGH2,CGH3}. In fact, together with works of Cristofaro-Gardiner \cite{Cristofaro-Gardiner}, Huang and Ramos \cite{Huang-Ramos}, and Ramos \cite{Ramos}, it follows that for a rational homology sphere $Y$, $\mathit{HF}^+(Y,\mfs)$ and $\hmto_\ast(Y,\mfs)$ are isomorphic as absolutely $\Q$-graded $\F[U]$-modules.

Like in Heegaard Floer and monopole Floer homologies, the Pin(2)-monopole Floer homology comes in a variety of flavors: $\hsto_\bullet$, $\hsfrom_\bullet$, and $\hsbar_\bullet$.  However, the Pin(2)-monopole Floer homology comes equipped with a more interesting module structure; this is the same structure which enables the more refined invariants leading to Manolescu's disproof of the Triangulation Conjecture in dimensions $\geq 5$ \cite{Manolescu} (see also \cite{Lin1}).  For a closed oriented three-manifold $Y$ equipped with a self-conjugate $\spinc$ structure $\mfs$, the invariants $\hsto_\bullet(Y,\mfs), \hsfrom_\bullet(Y,\mfs), \hsbar_\bullet(Y,\mfs)$ take the form of $\mathbb{Q}$-graded modules over $\R = \F[[V]][Q]/Q^3$, where $V$ and $Q$ are endomorphisms of degrees $-4$ and $-1$, respectively.  Note that we can also naturally equip any $\F[[U]]$-module with an $\R$-module structure, by having  $Q$ act by $0$ and $V$ by $U^2$.    

The following proposition displays the clear analogy between the flavors of the monopole Floer and Pin(2)-monopole Floer homologies.  

\begin{proposition}[Proposition 4.6 in Chapter 4 of \cite{Lin1}]\label{thm:hsbar}
Let $\mfs$ be a self-conjugate $\spinc$ structure on a rational homology sphere $Y$.  Then, up to an absolute grading shift, $\hsbar_\bullet(Y,\mfs) \cong \F[[V,V^{-1}][Q]/Q^3$.  
\end{proposition}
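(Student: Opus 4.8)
The plan is to adapt Kronheimer and Mrowka's computation of $\hmbar_\bullet$ of a rational homology sphere to the $\mathrm{Pin}(2)$-equivariant setting of \cite{Lin1}. The structural feature that makes this tractable is that $\hsbar_\bullet$ is, by construction, insensitive to the irreducible solutions of the Seiberg--Witten equations: it is computed from a complex built only from the reducible locus in the blown-up configuration space, together with the localization data built into the \emph{bar} flavor. So the first step is to reduce the statement to a computation purely in terms of reducibles.

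Because $Y$ is a rational homology sphere, $b_1(Y)=0$, so there is a unique reducible up to gauge; since $\mfs$ is self-conjugate, this reducible is fixed by the $j$-action and therefore carries an action of the full group $\mathrm{Pin}(2)=S^1\cup j\,S^1$, with $S^1$ acting by scalar multiplication on the spinor and $j$ by the quaternionic structure. After blowing up, the reducible locus becomes a Morse--Bott tower of critical points indexed by the eigenvalues of the perturbed Dirac operator $D$; quaternion-linearity of $D$---which holds precisely because $\mfs$ is self-conjugate---makes each eigenspace a quaternionic vector space on which $\mathrm{Pin}(2)$ acts. The second step is to identify the chain complex computing $\hsbar_\bullet(Y,\mfs)$ with the $\mathrm{Pin}(2)$-equivariant Morse--Bott complex of this tower.

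The third and main step is the computation itself. The complex is naturally a module over $H^*(B\mathrm{Pin}(2);\F)\cong\F[Q,V]/(Q^3)$ (with $V$ and $Q$ of degrees $-4$ and $-1$ as endomorphisms), and, as in the $S^1$-theory, the bar flavor is a Tate-type construction on which $V$ acts invertibly. One then shows that the equivariant homology of the eigenvalue tower, after this localization, collapses to the $\mathrm{Pin}(2)$-equivariant homology of a single point: the positive and negative ends of the tower are identified through the $V$-periodicity, and the nontriviality of the $j$-action is what makes the result all of $H^*(B\mathrm{Pin}(2))$ with $V$ inverted, namely $\F[V,V^{-1}][Q]/(Q^3)$ up to a grading shift, rather than the simpler answer $\F[U,U^{-1}]$ that appears for $\hmbar_\bullet$.

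The main obstacle is the bookkeeping of the eigenvalue tower together with its $j$-equivariance: one must verify that the $\mathrm{Pin}(2)$-equivariant homology of the reducible locus is exactly $H^*(B\mathrm{Pin}(2))$ with $V$ inverted---not a proper submodule or quotient---which is where quaternion-linearity of $D$ is used in an essential way, and that the outcome is independent of the metric and perturbation. A cleaner alternative, once $\hmbar_\bullet(Y,\mfs)\cong\F[U,U^{-1}]$ is known (by Kronheimer--Mrowka, or via Theorem~\ref{thm:klt} and the computation of $\mathit{HF}^\infty$), is to feed this into the Gysin exact sequence of \cite{Lin1} relating $\hsbar_\bullet$ and $\hmbar_\bullet$ and solve for $\hsbar_\bullet$; the only thing to rule out is a degenerate extension, which again follows from the nontriviality of $Q$. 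Since the statement is only up to an absolute grading shift, nothing further is needed to pin down the grading.
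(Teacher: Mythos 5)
The paper does not actually prove this statement: it is imported wholesale as Proposition~4.6 in Chapter~4 of \cite{Lin1}, so there is no in-paper proof for your argument to be measured against. Taken on its own terms, your primary sketch --- isolate the unique reducible (using $b_1(Y)=0$), blow up to the Dirac eigenvalue tower, observe that self-conjugacy of $\mfs$ makes the Dirac operator quaternion-linear so the tower carries a genuine $\mathrm{Pin}(2)$-action, and then compute the Tate-localized $\mathrm{Pin}(2)$-equivariant Morse--Bott homology of that tower --- is a fair outline of the route Lin actually takes, and it correctly identifies where self-conjugacy and the $j$-action enter to upgrade $H^*(BS^1)$-periodicity to $H^*(B\mathrm{Pin}(2))$-periodicity. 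So as a reconstruction of the cited proof it is on target.

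Your ``cleaner alternative'' via the Gysin sequence is weaker than you suggest, however. Knowing $\hmbar_\bullet(Y,\mfs)\cong\F[U,U^{-1}]$ and exactness of the bar-flavor Gysin triangle does not, by itself, pin down $\hsbar_\bullet$ even as a graded $\F$-vector space: exactness only tells you that in each even degree $\operatorname{coker}(Q)$ injects into $\F$ and $\ker(Q)$ is a quotient of $\F$, which leaves room for unbounded growth in rank unless you already know additional structure. To close the argument you need at minimum that $V$ acts invertibly on $\hsbar_\bullet$ (the Tate property of the bar flavor) and that $Q^2\ne 0$; but those are precisely the facts established by the reducible-locus computation, so the ``alternative'' is not independent of the primary approach --- it presupposes the structural input that the eigenvalue-tower analysis is there to supply. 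If you want to present the Gysin route, you should make these prerequisites explicit and say where they come from, rather than attributing them to ``the nontriviality of $Q$.''
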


\noindent This is analogous to the fact that $\hmbar_\bullet(Y,\mfs) \cong \F[[U,U^{-1}]$ for any $\spinc$ rational homology sphere.  Recall that this implies, using the long exact sequence relating $\hmto_\bullet$, $\hmfrom_\bullet$, and $\hmbar_\bullet$, that in sufficiently large gradings, the dimension of $\hmto_\bullet(Y,\mfs)$ alternates between one and zero.  
Likewise, Proposition~\ref{thm:hsbar} has the following consequence with regard to the rank of $\hsto$ of rational homology spheres in sufficiently large gradings.  

\begin{lemma}\label{lem:high-degree}
Let $\mfs$ be a self-conjugate $\spinc$ structure on a rational homology sphere $Y$.  Then $\dim_\F \hsto_k(Y,\mfs) \leq 1$ for $k \gg 0$.  
\end{lemma}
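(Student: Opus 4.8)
The plan is to reproduce, in the Pin(2) setting, the argument recalled just above for $\hmto_\bullet$. The two ingredients are the long exact sequence relating the three flavors $\hsfrom_\bullet$, $\hsbar_\bullet$, $\hsto_\bullet$ --- the Pin(2)-analogue of the sequence $\hmfrom_\bullet \to \hmbar_\bullet \to \hmto_\bullet$ (equivalently of $HF^- \to HF^\infty \to HF^+$) --- together with the explicit form of $\hsbar_\bullet$ from Proposition~\ref{thm:hsbar}. Since $Y$ is a rational homology sphere, $\mfs$ is torsion, so no completion subtleties arise, just as noted above for $\hmto$.

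First I would write down the exact triangle from \cite{Lin1},
\[ \cdots \longrightarrow \hsfrom_k(Y,\mfs) \longrightarrow \hsbar_k(Y,\mfs) \xrightarrow{\ p_*\ } \hsto_k(Y,\mfs) \xrightarrow{\ \delta\ } \hsfrom_{k-1}(Y,\mfs) \longrightarrow \cdots, \]
in which the connecting map $\delta$ lowers degree by one. Next I would invoke the formal property, parallel to the corresponding facts for $\hmfrom_\bullet$ and for $HF^-$, that $\hsfrom_\bullet(Y,\mfs)$ is supported in degrees bounded above. Consequently $\hsfrom_{k-1}(Y,\mfs) = 0$ once $k$ is large enough, and exactness of the displayed segment at $\hsto_k(Y,\mfs)$ then forces $p_* \colon \hsbar_k(Y,\mfs) \to \hsto_k(Y,\mfs)$ to be surjective in that range, so that $\dim_\F \hsto_k(Y,\mfs) \le \dim_\F \hsbar_k(Y,\mfs)$ for $k \gg 0$.

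The remaining step is the elementary computation of the graded ranks of $\hsbar_\bullet(Y,\mfs)$. By Proposition~\ref{thm:hsbar} this module is, up to an overall grading shift, $\F[V,V^{-1}][Q]/Q^3$ (completed as appropriate), with $\F$-basis $\{V^iQ^j : i \in \mathbb{Z},\ 0 \le j \le 2\}$ and $\deg(V^iQ^j) = -4i-j$ before the shift. Distinct basis elements lie in distinct degrees --- the degree determines $j \bmod 4$, hence $j$, hence $i$ --- so every graded piece of $\hsbar_\bullet(Y,\mfs)$ is at most one-dimensional over $\F$. Combined with the previous step, this gives $\dim_\F \hsto_k(Y,\mfs) \le 1$ for $k \gg 0$.

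I do not anticipate a genuine obstacle here: the argument is formal once the exact triangle and the boundedness from above of $\hsfrom_\bullet$ are in hand, both being part of the structural package developed by Lin in exact analogy with monopole Floer homology. In fact $\hsfrom_k(Y,\mfs) = 0$ for $k \gg 0$ makes $p_*$ an isomorphism in high degrees, so one even gets $\hsto_k(Y,\mfs) \cong \hsbar_k(Y,\mfs)$ there; only the stated inequality will be needed in Section~\ref{sec:proofs}. The sole point requiring a little care is citing the boundedness statement in the form appropriate to the (un)completed theory, which is harmless here because $\mfs$ is torsion.
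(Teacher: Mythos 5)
Your argument is correct and is essentially the paper's proof: you use the long exact sequence relating $\hsfrom_\bullet$, $\hsbar_\bullet$, $\hsto_\bullet$, the fact that $\hsfrom_k(Y,\mfs)=0$ for $k\gg 0$, and Proposition~\ref{thm:hsbar} to conclude $\hsbar_k\cong\hsto_k$ in high degrees (the paper also records the isomorphism rather than just the inequality), then check the graded ranks of $\F[[V,V^{-1}][Q]/Q^3$. The only cosmetic difference is your indexing and labeling of the maps in the exact triangle, which does not affect the argument.
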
 
\begin{proof}
This follows readily from the definition of the groups $\hsto_\bullet$, $\hsfrom_\bullet$, $\hsbar_\bullet$, the long exact sequence relating them, and Proposition \ref{thm:hsbar}. To be more explicit, by definition $\hsfrom_k(Y,\mfs)$ is zero for all sufficiently large $k\gg0$. Then the long exact sequence,
\[\cdots\xrightarrow{j_\ast}\hsfrom_{k+1}(Y,\mfs)\xrightarrow{p_\ast}\hsbar_{k}(Y,\mfs)\xrightarrow{i_\ast}\hsto_{k}(Y,\mfs)\xrightarrow{j_\ast}\hsfrom_{k}(Y,\mfs)\xrightarrow{p_\ast}\cdots,\] 
implies that $\hsbar_k(Y,\mfs)\cong\hsto_k(Y,\mfs)$ as vector spaces over $\F$ for all sufficiently large $k\gg 0$. On the other hand, Proposition \ref{thm:hsbar} implies that $\hsbar_k(Y,\mfs)$ has rank at most $1$ for any $k\in\mathbb{Z}$. This gives the desired result. 
\end{proof}

\noindent The key fact which allows us to transport information from $\hsto_\bullet$ to $\hmto_\bullet$ is the following Gysin sequence.

\begin{proposition}[Proposition 3.10 in Chapter 4 of \cite{Lin1}]\label{thm:gysin}
Let $Y$ be a closed oriented three-manifold equipped with a self-conjugate $\spinc$ structure $\mfs$.  Then there exists a long exact sequence:
\[
\ldots \to \hsto_{k+1}(Y,\mfs) \overset{e_{k+1}}\to \hsto_{k}(Y,\mfs) \overset{\iota_k}\to \hmto_{k}(Y,\mfs) \overset{\pi_k}\to \hsto_{k}(Y,\mfs) \overset{e_k}\to \hsto_{k-1}(Y,\mfs) \to \ldots 
\]
Further, the maps in this long exact sequence respect the $\R$-module structures.  
\end{proposition}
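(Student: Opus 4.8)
The plan is to recognize this sequence as the Seiberg--Witten incarnation of the Gysin sequence of a sphere bundle with fibre $S^0$, which is how it is established in Chapter~4 of \cite{Lin1}. Both $\hmto_\bullet(Y,\mfs)$ and $\hsto_\bullet(Y,\mfs)$ are extracted from one and the same Chern--Simons--Dirac flow on the blown-up configuration space: the monopole groups form a version of its $S^1$-equivariant Morse--Bott homology, the residual gauge group supplying the $S^1$, while the $\mathrm{Pin}(2)$-monopole groups use the larger $\mathrm{Pin}(2)$-symmetry present precisely because $\mfs$ is self-conjugate. Working in a finite-dimensional approximation, or directly with flow categories, producing the exact sequence reduces to comparing the $S^1$- and $\mathrm{Pin}(2)$-equivariant homology of a space carrying a $\mathrm{Pin}(2)$-action.

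First I would isolate the topological mechanism. The subgroup $S^1\subset\mathrm{Pin}(2)$ is normal with $\mathrm{Pin}(2)/S^1\cong\mathbb{Z}/2$, so for any $\mathrm{Pin}(2)$-space $X$ the natural map $X\times_{S^1}E\mathrm{Pin}(2)\to X\times_{\mathrm{Pin}(2)}E\mathrm{Pin}(2)$ is a double cover, i.e.\ the unit-sphere bundle of a real line bundle whose Euler class is the generator $w\in H^1(B\mathrm{Pin}(2);\F)$. Here $H^*(B\mathrm{Pin}(2);\F)\cong\F[w,v]/(w^3)$ with $|w|=1$ and $|v|=4$ (a regraded completion of this ring being $\R$), and along $BS^1\to B\mathrm{Pin}(2)$ one has $w\mapsto 0$ and $v\mapsto u^2$ --- exactly the recipe that turns an $\F[[U]]$-module into an $\R$-module. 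The Gysin (transfer) long exact sequence of this $S^0$-bundle reads, in homology,
\[
\cdots\to H^{\mathrm{Pin}(2)}_{k}(X)\xrightarrow{\ p^!\ }H^{S^1}_{k}(X)\xrightarrow{\ p_\ast\ }H^{\mathrm{Pin}(2)}_{k}(X)\xrightarrow{\ \cap w\ }H^{\mathrm{Pin}(2)}_{k-1}(X)\to\cdots,
\]
where $p_\ast$ is the covering projection and $p^!$ its transfer; all three maps are $H^*(B\mathrm{Pin}(2);\F)$-linear, since the covering is $\mathrm{Pin}(2)$-equivariant over the Borel construction and $\cap w$ is cap product with a class of the base. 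Substituting the Seiberg--Witten data, so that the $S^1$-equivariant groups become $\hmto_\bullet(Y,\mfs)$ and the $\mathrm{Pin}(2)$-equivariant groups become $\hsto_\bullet(Y,\mfs)$, produces the sequence of Proposition~\ref{thm:gysin} with $\iota_k=p^!$, $\pi_k=p_\ast$, and $e_k=\cap w$ the degree $-1$ connecting map, which is the action of $Q\in\R$; the asserted $\R$-linearity is then inherited from $H^*(B\mathrm{Pin}(2);\F)$-linearity. For non-torsion $\mfs$ one runs everything with the $U$-completed groups, as in the discussion following Theorem~\ref{thm:klt}.

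The substance --- and the main obstacle --- is making this honest at the chain level, which is what \cite{Lin1} carries out: one must define $\iota_k$, $\pi_k$, $e_k$ from moduli spaces of (perturbed) Seiberg--Witten trajectories and check, via gluing and dimension counts, that they assemble into a short exact sequence of the specific chain complexes computing $\hmto_\bullet$ and $\hsto_\bullet$. The delicate feature is the reducible critical locus, where the $S^1$-isotropy is all of $S^1$ but the $\mathrm{Pin}(2)$-isotropy is strictly larger; this is the same mismatch that makes $\hsbar_\bullet$ a sum of several towers rather than a single one (Proposition~\ref{thm:hsbar}), and the Morse--Bott model near the reducibles must be arranged so that both equivariant theories, and the comparison between them, behave correctly there. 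Equivariant transversality for the perturbed flow, and the compatibility of the two a priori different equivariant models of $E\mathrm{Pin}(2)$, need similar care. Once the abstract $S^0$-bundle Gysin sequence is available, however, these are the only points requiring attention, and the remaining assertions --- in particular compatibility with the $\R$-module structures --- follow formally.
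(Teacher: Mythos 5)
This proposition is imported verbatim from Lin's work: the paper attributes it to Proposition~3.10 in Chapter~4 of \cite{Lin1} and gives no proof of its own, so there is no argument in the paper to compare your proposal against. What you have written is a serviceable overview of where the sequence comes from in \cite{Lin1}, not a different route --- you correctly isolate the topological skeleton (the $S^0$-bundle $X\times_{S^1}E\mathrm{Pin}(2)\to X\times_{\mathrm{Pin}(2)}E\mathrm{Pin}(2)$ coming from the index-two inclusion $S^1\subset\mathrm{Pin}(2)$, the cohomology ring $\F[w,v]/(w^3)$ of $B\mathrm{Pin}(2)$ with $w\mapsto 0$, $v\mapsto u^2$ under restriction to $BS^1$, and the identification of $e_k$, $\iota_k$, $\pi_k$ with cap product against $w$, the transfer, and the projection pushforward), and this is indeed the mechanism Lin's Gysin sequence is modeled on.

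That said, you should be clear-eyed that this is a sketch of a citation, not a proof: the one-paragraph reduction ``substituting the Seiberg--Witten data'' conceals essentially all of the work. $\hmto_\bullet$ and $\hsto_\bullet$ are not literally Borel-equivariant homologies of a single finite-dimensional $\mathrm{Pin}(2)$-space; they are Morse--Bott Floer homologies built on the blown-up configuration space, and the chain-level maps $\iota$, $\pi$, $e$ must be defined directly from moduli spaces with the correct signs/counts, with the Gysin exactness, the $\R$-linearity, and the behavior near the reducible locus each requiring their own argument (this is what occupies Chapter~4 of \cite{Lin1}). You do flag this honestly, which is the right instinct, but the present paper simply takes the result as a black box; for the purposes of this note it would be more appropriate to cite Lin and move on rather than to re-derive even the heuristic.
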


\noindent With the preceding understood, we are ready to prove Theorem~\ref{thm:main}.  
\section{Proof of Theorem~\ref{thm:main}}
\label{sec:proofs}

In order to prove Theorem~\ref{thm:main}, we will simply show that an $\F[[U]]$-module of the form $\tower_{(d)} \oplus N$ where $N$ is an $r$-dimensional torsion module supported in degrees greater than $d$ with $r$ an odd integer cannot fit into the Gysin sequence with an $\R$-module satisfying Lemma~\ref{lem:high-degree}. As explained momentarily, a duality argument rules out the case where $N$ is supported in degrees less than $d-1$. This will imply that such an $\F[[U]]$-module cannot occur as the monopole Floer homology of a rational homology sphere with a self-conjugate $\spinc$ structure.  The $\R$-module structure will be key. 

Note that the isomorphisms of Theorem~\ref{thm:klt} are only relatively graded.  However, from the proof, it will be clear that the absolute grading does not play a role.  We therefore assume for notational simplicity that $d = 0$ throughout.   

Meanwhile, by \cite[Proposition 28.3.4]{KM}, $\hmto_\bullet(Y,\mfs)\cong\hmfrom^\bullet(-Y,\mfs)$ via an isomorphism sending elements in grading $k$ to elements in grading $-(k+1)$. Working with coefficients in the field $\F$, we also have $\hmfrom^\bullet(-Y,\mfs)\cong\hmfrom_\bullet(-Y,\mfs)$. Hence, if $\hmto_\bullet(Y,\mfs)\cong \tower_{(0)} \oplus \hmred(Y,\mfs)$ where $\hmred(Y,\mfs)$ is $r$-dimensional and is supported in degrees less than $-1$ with $r$ an odd integer, then $\hmto_\bullet(-Y,\mfs)\cong \tower_{(0)} \oplus \hmred(-Y,\mfs)$ where $\hmred(-Y,\mfs)$ is $r$-dimensional and is supported in degrees greater than $0$. Therefore, it suffices to prove the non-realizability of $\tower_{(0)} \oplus \hmred$ where $\hmred$ is $r$-dimensional and is supported in degrees greater than $0$ with $r$ an odd integer. 

With the preceding understood, suppose that $\hmto_\bullet(Y,\mfs) \cong\tower_{(0)} \oplus \hmred(Y,\mfs)$ with $\hmred(Y,\mfs)$ supported only in positive degree.
\begin{lemma}\label{lem:low-degree}
 $\hsto_k(Y, \mfs)=0$ for $k < 0$ and $\hsto_0(Y, \mfs) = \F$.
\end{lemma}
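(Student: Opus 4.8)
The plan is to run the Gysin sequence of Proposition~\ref{thm:gysin} against the hypothesis on $\hmto_\bullet(Y,\mfs)$, which completely controls that module in low degrees. Since $\hmto_\bullet(Y,\mfs)\cong\tower_{(0)}\oplus\hmred(Y,\mfs)$ with $\hmred(Y,\mfs)$ supported in strictly positive degrees and $\tower_{(0)}$ supported in degrees $0,2,4,\dots$, we have $\hmto_k(Y,\mfs)=0$ for $k<0$ and $\hmto_0(Y,\mfs)=\F$. These two facts are all the input about $\hmto_\bullet$ we will use.

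First I would treat the negative degrees. For $k<0$ both $\hmto_k(Y,\mfs)$ and $\hmto_{k-1}(Y,\mfs)$ vanish, so in the Gysin sequence the maps $\iota_k$ and $\pi_k$ are zero, and exactness forces the Euler class map $e_k\colon\hsto_k(Y,\mfs)\to\hsto_{k-1}(Y,\mfs)$ to be an isomorphism for every $k<0$. Hence all the groups $\hsto_k(Y,\mfs)$, $k<0$, are abstractly isomorphic, and it remains to see that this common value is zero. For this I would use that $\hsto_\bullet(Y,\mfs)$ is bounded below — the Pin(2)-analogue of the boundedness below of $\mathit{HF}^+\cong\hmto_\bullet$ — which can be quoted from \cite{Lin1} (or deduced from the Pin(2)-version of the duality $\hmto_\bullet(Y,\mfs)\cong\hmfrom_\bullet(-Y,\mfs)$ together with the vanishing $\hsfrom_k=0$ for $k\gg0$ already recalled in the proof of Lemma~\ref{lem:high-degree}); then the chain of isomorphisms must terminate in $0$. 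Alternatively, and without quoting boundedness, one can bring in the $\R$-module structure: in Lin's construction the Euler class map $e$ is multiplication by $Q$, so for $k<0$ the triple composite $e_{k-2}\circ e_{k-1}\circ e_k$ is simultaneously an isomorphism $\hsto_k(Y,\mfs)\xrightarrow{\sim}\hsto_{k-3}(Y,\mfs)$ and equal to multiplication by $Q^3=0$, which forces $\hsto_k(Y,\mfs)=0$. Either way, $\hsto_k(Y,\mfs)=0$ for all $k<0$.

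With $\hsto_k(Y,\mfs)=0$ for $k<0$ in hand, I would finish by reading off degree $0$ from the Gysin sequence:
\[
\hsto_1(Y,\mfs)\xrightarrow{e_1}\hsto_0(Y,\mfs)\xrightarrow{\iota_0}\hmto_0(Y,\mfs)=\F\xrightarrow{\pi_0}\hsto_0(Y,\mfs)\xrightarrow{e_0}\hsto_{-1}(Y,\mfs)=0.
\]
Since $e_0=0$, exactness makes $\pi_0$ surjective, so $\dim_\F\hsto_0(Y,\mfs)\le 1$; and if $\hsto_0(Y,\mfs)$ were $0$ then $\pi_0$ would be the zero map, so exactness at $\hmto_0(Y,\mfs)$ would give $\operatorname{im}\iota_0=\ker\pi_0=\F$, which is impossible because $\iota_0$ has domain $0$. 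Therefore $\hsto_0(Y,\mfs)=\F$.

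The step I expect to be the real content is the passage from "$\hsto_k(Y,\mfs)$, $k<0$, are all isomorphic" to "they all vanish": the Gysin sequence by itself only pins these groups down up to a common, a priori unknown, stable value, and killing it requires an input beyond a diagram chase — either the identification of $e$ with multiplication by $Q$ (exploiting $Q^3=0$) or a boundedness/duality statement about $\hsto_\bullet$ drawn from \cite{Lin1}. The remaining pieces are routine.
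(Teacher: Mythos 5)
Your proof is correct and matches the paper's approach: the vanishing of $\hmto_k(Y,\mfs)$ in negative degrees makes each $e_k$ an isomorphism for $k<0$, the paper then quotes (as you do in your first alternative) that $\hsto_k=0$ for $k\ll 0$ by definition to conclude the stable value is zero, and exactness at degree $0$ pins down $\hsto_0(Y,\mfs)=\F$. Your second alternative, killing the common value via $Q^3=0$ and the identification of $e$ with $Q$-multiplication, is a nice self-contained variant not used in the paper; just note it draws on a fact about $e$ from \cite{Lin1} that Proposition~\ref{thm:gysin} as restated here records only implicitly via $\R$-linearity.
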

\begin{proof} 
Since  $\hmto_k(Y, \mfs)=0$ for $k < 0$, the Gysin sequence in Proposition \ref{thm:gysin} gives isomorphisms $\hsto_k(Y, \mfs) \cong \hsto_{k-1}(Y, \mfs)$ for all $k < 0$. Thus $\hsto_k(Y, \mfs)$ is isomorphic for all $k < 0$. But $\hsto_k(Y, \mfs) = 0$ for sufficiently negative $k$ by definition, so we must have that $\hsto_k(Y, \mfs) = 0$ for all $k < 0$. Finally, $\hmto_0(Y, \mfs) = \F$ (since $\hmred$ is 0 in degree 0), and the exactness of
\begin{equation}\label{eq:pi0}
\cdots \to \hsto_{0}(Y,\mfs) \overset{i_0}\to \hmto_{0}(Y,\mfs) \overset{\pi_0}\to \hsto_{0}(Y,\mfs) \to 0 
\end{equation}
implies that $\pi_0$ is an isomorphism and $\hsto_0(Y, \mfs) = \F$.
\end{proof}

Given $k \ge 0$ even, let $\bar\pi_k$ denote the restriction of the map $\pi_k: \hmto_k(Y, \mfs) \to \hsto_k(Y, \mfs)$ in the Gysin sequence in Proposition \ref{thm:gysin} to the part of the tower $\tower_{(0)}$ with grading $k$.  Although the splitting of $\hmto(Y,\mfs) \cong \tower_{(0)} \oplus \hmred(Y,\mfs)$ is non-canonical, we can identify $\tower_{(0)}$ canonically as a submodule of $\hmto(Y,\mfs)$ by considering the image of $U^\ell$ for $\ell \gg 0$.  Thus, the restriction of $\pi_k$ to $\tower_{(0)}$ is well-defined. 

\begin{lemma}\label{lem:Usquared-action}
For each $i \ge 0$, $\bar\pi_{4i}$ is nontrivial and $\bar\pi_{4i+2}$ is trivial.
\end{lemma}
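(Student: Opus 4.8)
The plan is to deduce everything from two facts already in hand: the Gysin maps of Proposition~\ref{thm:gysin} are $\R$-linear, and under the $\R$-module structure on $\hmto(Y,\mfs)$ the operator $V$ acts as $U^2$ while $Q$ acts as $0$. Throughout, let $t_m$ denote the canonical tower generator of $\hmto(Y,\mfs)$ in degree $m$, well-defined as in the paragraph preceding the Lemma, so that $U\,t_m=t_{m-2}$ on the tower (with $t_0$ the bottom class).

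\emph{Nontriviality of $\bar\pi_{4i}$.} I would argue by induction on $i$. For $i=0$: as shown in the proof of Lemma~\ref{lem:low-degree}, $\pi_0$ is an isomorphism $\hmto_0(Y,\mfs)\to\hsto_0(Y,\mfs)$, and since $\hmred(Y,\mfs)$ vanishes in degree $0$ the group $\hmto_0(Y,\mfs)=\F$ is exactly the degree-$0$ part of the tower; hence $\bar\pi_0$ is nontrivial. For the inductive step, note that $\pi$ commutes with $V$ by Proposition~\ref{thm:gysin}, and that $V\,t_{4i+4}=U^2 t_{4i+4}=t_{4i}$ on the tower, so $\bar\pi_{4i}(t_{4i})=V\cdot\bar\pi_{4i+4}(t_{4i+4})$. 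Thus if $\bar\pi_{4i}$ is nontrivial, so is $\bar\pi_{4i+4}$. The same identity $\bar\pi_{4i+2}(t_{4i+2})=V\cdot\bar\pi_{4i+6}(t_{4i+6})$ shows that vanishing of $\bar\pi_{4i+6}$ forces vanishing of $\bar\pi_{4i+2}$, so for the triviality half of the Lemma it suffices to treat $i\gg 0$.

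\emph{High degrees.} Since $\pi$ is $Q$-linear and $Q$ annihilates $\hmto(Y,\mfs)$, we have $\operatorname{im}(\pi_k)\subseteq\ker\!\big(Q\colon\hsto_k(Y,\mfs)\to\hsto_{k-1}(Y,\mfs)\big)$ for every $k$. For $k\gg 0$ we have $\hsfrom_k(Y,\mfs)=0$, so the $\R$-linear long exact sequence relating $\hsfrom_\bullet$, $\hsbar_\bullet$, $\hsto_\bullet$ (displayed in the proof of Lemma~\ref{lem:high-degree}) gives an $\R$-linear isomorphism $\hsbar_k(Y,\mfs)\cong\hsto_k(Y,\mfs)$ in all sufficiently large degrees, and in particular identifies the corresponding kernels of $Q$. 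By Proposition~\ref{thm:hsbar}, $\hsbar_\bullet(Y,\mfs)\cong\F[[V,V^{-1}][Q]/Q^3$ with generator in some degree $\delta$; a direct computation gives $\ker\!\big(Q\colon\hsbar_\bullet(Y,\mfs)\to\hsbar_{\bullet-1}(Y,\mfs)\big)=Q^2\cdot\F[[V,V^{-1}]$, which is one-dimensional in degrees $\equiv\delta-2\pmod 4$ and zero otherwise. The nontriviality of $\bar\pi_{4i}$ for $i\gg 0$, already established, forces $\ker(Q\colon\hsbar_{4i}\to\hsbar_{4i-1})\neq 0$, hence $\delta\equiv 2\pmod 4$. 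Consequently, for $k=4i+2\gg 0$ we have $k\equiv 2\not\equiv 0\equiv\delta-2\pmod 4$, so $\ker(Q\colon\hsbar_k\to\hsbar_{k-1})=0$, hence $\ker(Q\colon\hsto_k\to\hsto_{k-1})=0$, hence $\operatorname{im}(\pi_k)=0$; in particular $\bar\pi_{4i+2}$ is trivial for $i\gg 0$. Propagating this vanishing downward via the $V$-equivariance identity $\bar\pi_{4i+2}(t_{4i+2})=V\cdot\bar\pi_{4i+6}(t_{4i+6})$ then yields triviality of $\bar\pi_{4i+2}$ for all $i\ge 0$.

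\emph{Expected obstacle.} The conceptual heart, and the step requiring the most care, is the bookkeeping in the third paragraph: one must recognize that the $Q$-action confines $\operatorname{im}(\pi_k)$ to a single residue class modulo $4$ in high degrees, and that the previously established nontriviality of $\bar\pi_{4i}$ pins that class down to be precisely the one for which $\bar\pi_{4i+2}$ is forced to vanish. One should also be careful that the identification $\hsto_k\cong\hsbar_k$ in high degrees is genuinely $\R$-linear, so that it matches kernels of $Q$ rather than merely dimensions; the relative grading ambiguity in Theorem~\ref{thm:klt} is harmless here, since only residues modulo $4$ enter and the normalization $d=0$ has been fixed.
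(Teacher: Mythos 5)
Your argument for the nontriviality of $\bar\pi_{4i}$ matches the paper's exactly: the base case $i=0$ follows from Lemma~\ref{lem:low-degree}, and the inductive step uses $V$-equivariance of $\pi$ together with $V=U^2$ on $\hmto_\bullet$. For the triviality of $\bar\pi_{4i+2}$, however, you take a genuinely different and more structural route. The paper argues by contradiction via a dimension count: if some $\bar\pi_{4i+2}$ were nontrivial, then (combining with the already-established nontriviality of the $\bar\pi_{4j}$'s) every $\bar\pi_{2k}$ with $k\gg0$ would be nontrivial, and the Gysin pieces in high degrees would force $\dim_\F\hsto_k(Y,\mfs)$ to grow without bound, contradicting Lemma~\ref{lem:high-degree}. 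You instead exploit the $Q$-action directly: since $Q$ kills $\hmto_\bullet$ and $\pi$ is $\R$-linear, $\operatorname{im}(\pi_k)$ must land in $\ker(Q)$; in high degrees $\hsto_k\cong\hsbar_k$ as $\R$-modules, and Proposition~\ref{thm:hsbar} pins $\ker(Q)$ to a single residue class mod $4$, which the nontriviality of $\bar\pi_{4i}$ identifies as the class $0\bmod 4$, forcing $\bar\pi_{4i+2}=0$ in high degrees; the vanishing then propagates downward by $V$-equivariance. Your version makes the role of the extra $Q$-structure more transparent and is a direct argument rather than a proof by contradiction, at the cost of a slightly more delicate bookkeeping step (identifying the $Q$-kernel of $\hsto_\bullet$ with that of $\hsbar_\bullet$ in high degrees, which requires the isomorphism $\hsbar_k\cong\hsto_k$ to be $\R$-linear in two consecutive degrees); the paper's version only uses the crude dimension bound from Lemma~\ref{lem:high-degree} and keeps the $Q$-structure implicit. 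Both are correct, and both ultimately rest on the same two ingredients, namely Propositions~\ref{thm:hsbar} and~\ref{thm:gysin}.
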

\begin{proof}
Suppose that $\bar\pi_i$ is nontrivial for some even $i$. We deduce that $\bar\pi_{i+4}$ is also nontrivial from the fact that the Gysin sequence respects the module structures on $\hmto_\bullet$ and $\hsto_\bullet$. In particular $\bar\pi_i \circ U^2 = V \circ \bar\pi_{i+4}$. Since $U^2$ gives a nontrivial map between (the restrictions to $\tower_{(0)}$ of) $\hmto_{i+4}(Y, \mfs)$ and $\hmto_i(Y, \mfs)$ and $\bar\pi_i$ is nontrivial, we must have that $\bar\pi_{i+4}$ is nontrivial. That $\bar\pi_0$ is nontrivial follows from Equation \eqref{eq:pi0} (note that since $\hmred$ is trivial in degree 0, we have $\bar\pi_0 = \pi_0$). By induction, it follows that $\bar\pi_{4i}$ is nontrivial for all $i \ge 0$.

Now suppose that $\bar\pi_{4i+2}$ is also nontrivial for some $i \ge 0$ . Then by the argument above $\bar\pi_{4j + 2}$ is nontrivial for all $j \ge i$, and so $\bar\pi_{2k}$ is nontrivial for all $k \ge 2i$. For sufficiently high degrees (in particular, higher than the support of $\hmred$), the Gysin sequence breaks into pieces of the form
\[
0 \to \hsto_{2k+1}(Y,\mfs) \to \hsto_{2k}(Y,\mfs) \overset{0}\to \F \overset{\bar\pi_{2k}}\hookrightarrow \hsto_{2k}(Y,\mfs) \to \hsto_{2k-1}(Y,\mfs) \to 0. 
\]
Thus we have isomorphisms $\hsto_{2k+1}(Y, \mfs) \cong \hsto_{2k}(Y, \mfs)$ and $\dim_\F \hsto_{2k}(Y, \mfs)$ is strictly larger than $\dim_\F \hsto_{2k-1}(Y, \mfs)$. It follows that $\dim_\F \hsto_{k}(Y, \mfs)$ grows without bound for sufficiently large $k$, violating Lemma \ref{lem:high-degree}. Thus $\bar\pi_{4i+2}$ must be trivial for all $i \ge 0$.
\end{proof}

With these lemmas, we are ready to complete the proof of Theorem~\ref{thm:main}.  For sufficiently large $k$, $\hmto_{4k+1}(Y,\mfs)$ and $\hmto_{4k+3}(Y,\mfs)$ are zero. Consider the portions of the Gysin sequence centered around $\hmto_{4k+2}(Y,\mfs)$ and $\hmto_{4k}(Y,\mfs)$:
\[ 0 \to \hsto_{4k+3}(Y,\mfs) \to \hsto_{4k+2}(Y,\mfs) \twoheadrightarrow \F \overset{0}\to \hsto_{4k+2}(Y,\mfs) \to \hsto_{4k+1}(Y,\mfs) \to 0, \]
\[ 0 \to \hsto_{4k+1}(Y,\mfs) \to \hsto_{4k}(Y,\mfs) \overset{0}\to \F \hookrightarrow \hsto_{4k}(Y,\mfs) \to \hsto_{4k-1}(Y,\mfs) \to 0. \]
Note that the non-triviality (respectively triviality) of the map $\bar\pi_{4k}: \F \to \hsto_{4k}(Y,\mfs)$ (respectively $\bar\pi_{4k+2}: \F \to \hsto_{4k+2}(Y,\mfs)$) is determined by Lemma~\ref{lem:Usquared-action}.
For $k\gg 0$, this gives the following isomorphisms:
\[  \hsto_{4k+1}(Y,\mfs) \cong \hsto_{4k}(Y,\mfs), \]
\[  \hsto_{4k+2}(Y,\mfs) \cong \hsto_{4k+1}(Y,\mfs), \]
\[  \hsto_{4k+3}(Y,\mfs) \oplus \F \cong \hsto_{4k+2}(Y,\mfs),\]
\[  \hsto_{4k+4}(Y,\mfs) \cong \hsto_{4k+3}(Y,\mfs) \oplus \F.\]
In particular, since $\hsto_{4k+4}(Y,\mfs) \cong \hsto_{4k+3}(Y,\mfs) \oplus \F$ and by Lemma \ref{lem:high-degree} we have that $\dim_\F \hsto_{4k+4}(Y,\mfs) \le 1$,  it follows that $\hsto_{4k+3}(Y,\mfs) = 0$ for all sufficiently large $k$.

Fix some large $k$ such that $4k+3$ is larger than the maximum degree in the support of $\hmred(Y,\mfs)$ and $\hsto_{4k+3}(Y,\mfs) = 0$. Consider the Gysin sequence between $\hmto_{4k+3}(Y,\mfs) = 0$ and $\hmto_{-1}(Y,\mfs) = 0$. By exactness, the sum of dimensions of each group in this sequence must be even. The groups that appear in this sequence are $\hmto_i(Y,\mfs)$ for each $0 \le i \le 4k + 2$, $\hsto_{4k+3}(Y,\mfs) = 0$, $\hsto_{-1}(Y,\mfs) = 0$, and two copies of $\hsto_i(Y,\mfs)$ for each $0 \le i \le 4k+2$. It follows that
\[ \sum_{i=0}^{4k+2} \dim_\F \hmto_i(Y,\mfs) = 2(k+1) + \sum_{i=0}^{4k+2} \dim_\F \mathit{HM}_{red,i}(Y,\mfs) \]
is even, where the first term on the right is the contribution to the tower from degrees zero to $4k+2$, and the second term is simply $\dim_\F \hmred(Y,\mfs)$ since by assumption $\hmred(Y,\mfs)$ is supported in positive degrees at most $4k+2$. Therefore, $\dim_\F \hmred(Y,\mfs)$ must be even.  This completes the proof of Theorem~\ref{thm:main}.
\section{Proof of Corollary~\ref{cor:noknotcomplex}}
\label{sec:corollary}
By way of background, Hom introduced a concordance invariant, denoted $\epsilon$, that takes values in the set $\{-1,0,1\}$ \cite{Hom}. This invariant is zero for smoothly slice knots, and it was used by Hom to prove the existence of an infinite rank summand in the subgroup of the smooth concordance group generated by topologically slice knots \cite{HomSummand}. Later on, Ozsv\'ath, Stipsicz, and Szab\'o introduced another concordance invariant, denoted $\Upsilon$, which is a piecewise linear real-valued function on $[0,2]$ \cite{OSS}. Hom showed in \cite{Hom2} that there exist knots $K$ with $\Upsilon_K\equiv0$ but $\epsilon(K)\neq0$. Conversely, Ozsv\'ath, Stipsicz, and Szab\'o presented a chain complex $C$ that is a free $\F[U,U^{-1}]$-module with five generators and differential described by the following diagram:
\begin{figure}[h]
\centering
\includegraphics[width=2.5in]{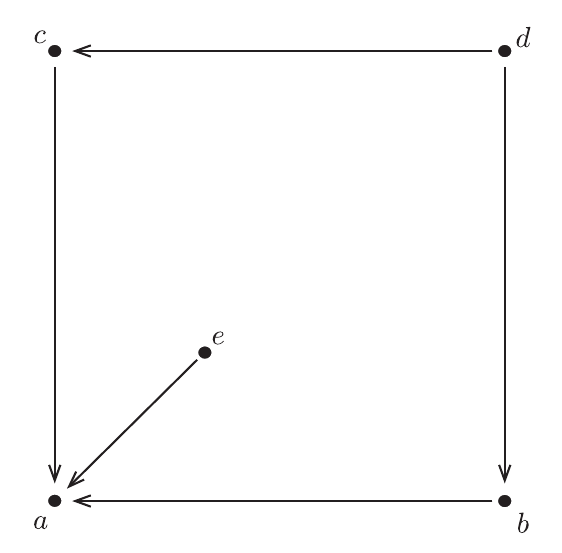}
\caption{The Alexander gradings of the five generators are as follows: $\mathcal{A}(a)=\mathcal{A}(d)=\mathcal{A}(e)=0$, $\mathcal{A}(b)=-3$, and $\mathcal{A}(c)=3$.}
\label{fig:complex}
\end{figure}

\noindent They show in \cite[Proposition 9.4]{OSS} that the chain complex $C$ depicted in Figure \ref{fig:complex} is $\epsilon$-trivial, equivalently $\epsilon(C)=0$, but it has non-trivial $\Upsilon_C$. They use this to argue that there is no algebraic obstruction to existence of knots $K$ with $\epsilon(K)=0$ but $\Upsilon_K\nequiv0$. We show that the chain complex $C$ cannot be realized as the knot Floer complex of a knot.
\begin{proof}
To start the proof, suppose to the contrary that such a knot $K\subset Y$ exists. Then $\cfkinf(Y,K)$ is as in Figure \ref{fig:fullcfk}. 
\begin{figure}[b]
\centering
\begin{subfigure}[b]{0.4\textwidth}
\includegraphics[width=\textwidth]{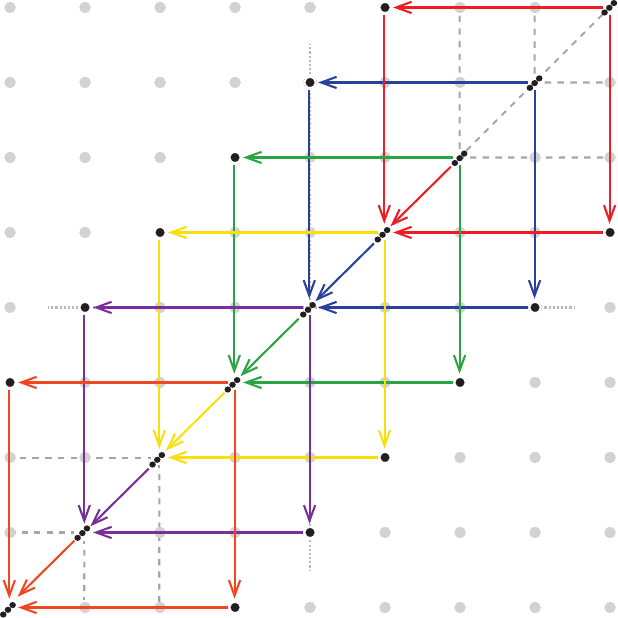}
\caption{}
\label{fig:fullcfk}
\end{subfigure}
\qquad
\begin{subfigure}[b]{0.4\textwidth}
\includegraphics[width=\textwidth]{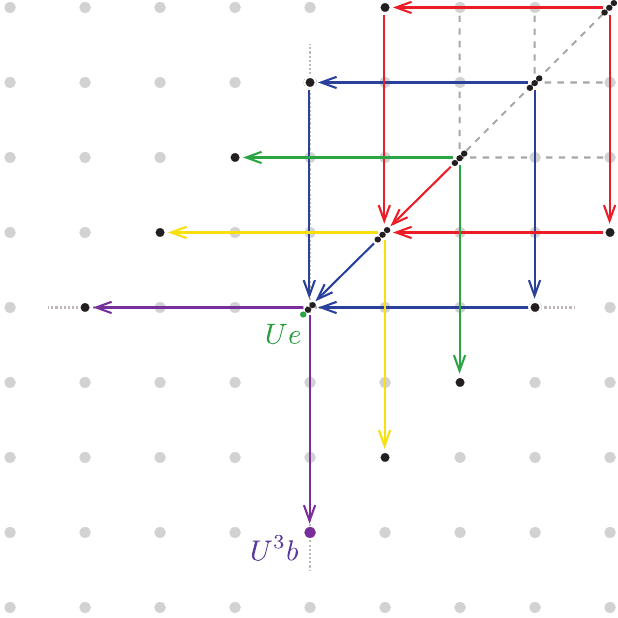}
\caption{}
\label{fig:aplus}
\end{subfigure}
\caption{(\textsc{a}) The complex $\cfkinf(Y,K)$ in the $(i,j)$-plane. Different $U$-translates of the diagram in Figure \ref{fig:complex} are depicted in different colors. (\textsc{b}) The complex $A_0^+$.}
\label{fig:cfk}
\end{figure}
By \cite[Theorem 4.4]{OSHFK} (also see \cite[Theorem 2.3]{OSSurgery}), $\HFplus(Y_p(K),s)\cong H_\ast(A_0^+)$ for large odd values of $p>0$ where $s$ is the unique self-conjugate $\spinc$-structure on $Y_p(K)$ and $A_0^+$ is the chain complex consisting of elements in $\cfkinf(Y,K)$ with $\mathit{max}(i,j)\geq0$ (see Figure \ref{fig:aplus}.) It is easy to check that $H_\ast(A_0^+) = (\F[U,U^{-1}]/U \cdot \F[U])\cdot [U^3b]\oplus \F\cdot[Ue]$. Note also that the relative grading between $Ue$ and $U^3b$ is more than $1$; in fact, $\mathit{gr}(Ue,U^3b)=4$. Therefore, $H_\ast(A_0^+)\cong \tower_{(-4)} \oplus \F_{(0)}$, as relatively-graded groups, which contradicts Corollary \ref{cor:main}. As a result, there does not exist any $K\subset Y$ with $\cfkinf(Y,K)$ as described by the diagram in Figure \ref{fig:complex}.
\end{proof}
\begin{remark}
We would like to note here that if the chain complex $C$ were modified so as to make the diagonal arrow have twice the length then the resulting chain complex would still be $\epsilon$-trivial with non-trivial $\Upsilon$ invariant, but the $\F[U]$-torsion in the homology of the resulting $A_0^+$ complex would have even rank over $\F$. Therefore, we are not able to use the above argument to obstruct that complex from being the knot Floer complex of a knot in an integer homology sphere.
\end{remark}

\newpage
\bibliographystyle{amsplain}


\end{document}